\newtheorem{theorem}{Theorem}[section]
\newtheorem{corollary}[theorem]{Corollary}
\newtheorem{proposition}[theorem]{Proposition}
\newtheorem{observation}{Observation}
\theoremstyle{definition}
\newtheorem{example}[theorem]{Example}
\newtheorem{definition}[theorem]{Definition}
\newtheorem{openquestion}{Open Question}
\newcommand{\rmv}[1]{}
\title{Partial Domination in Graphs}
\author[Case, Hedetniemi, Laskar, Lipman]{Benjamin M. Case \and Stephen T. Hedetniemi \and Renu C. Laskar \and Drew J. Lipman }
\address{Dept of Mathematical Sciences, Clemson University}
\begin{document}

	\begin{abstract}
	A set $S\subseteq V$ is a \textit{dominating set} of $G$ if every vertex in $V - S$ is adjacent to at least one vertex in $S$. The \textit{domination number} $\gamma(G)$ of $G$ equals the minimum cardinality of a dominating set $S$ in $G$; we say that such a set $S$ is a $\gamma$\textit{-set}. The single greatest focus of research in domination theory is the determination of the value of $\gamma(G)$. By definition, all vertices must be dominated by a $\gamma$-set. In this paper we propose relaxing this requirement, by seeking sets of vertices that dominate a prescribed fraction of the vertices of a graph. We focus particular attention on $1/2$ domination, that is, sets of vertices that dominate at least half of the vertices of a graph $G$. 
	\medskip
	
	Keywords:  partial domination, dominating set, partial domination number, domination number	
	
%
%
	\end{abstract}
	\maketitle
	\section{Introduction}
	
	Let $G=(V,E)$ be a graph with vertex set $V=\{v_1,v_2,...,v_n\}$ and \emph{order} $n = |V|$. The {\em open neighborhood} of a vertex $v$ is the set $N(v) := \{u\: |\: uv \in E\} $ of vertices $u$ that are adjacent to $v$; the \emph{closed neighborhood} of $v$, $N[v]:=N(v)\cup \{v\}.$ A set $S\subseteq V$ is a \emph{dominating set} of $G$ if every vertex in $V - S$ is adjacent to at least one vertex in $S$, or equivalently, if $N[S] := \bigcup_{u\in S} N[u] = V$. The \emph{domination number} $\gamma (G)$ of $G$ equals the minimum cardinality of a dominating set $S$ in $G$; we say that such a set $S$ is a \emph{$\gamma$-set}. 
	
	The overwhelming focus of the more than 3,000 papers that have been published on dominating sets in graphs has been on determining the properties of a wide variety of variations of dominating sets in graphs, good bounds for various domination numbers, and the complexity of computing domination numbers. The definitions of different types of dominating sets all have in common, however, that a set $S$ must satisfy $N[S] = V$ in order to be called a dominating set. This requirement is important in a wide variety of applications where one must provide some level of service or resource to \textit{every} member of a network. For example, if surveillance of every node in a network must be provided, this can be done by surveillance cameras located at the nodes in a dominating set. 
	
	Alternatively, it may not be necessary, or profitable, to provide complete coverage of a network. For example, providing some service to outlying areas may not be sufficiently profitable if the number of dominated nodes per vertex is low. In these cases, a company only seeks to dominate nodes in a network that are profitable to do so. This gives rise to the notion of \emph{partial domination} in graphs. 
	
	\begin{definition}\label{def:PDom}
		For any graph $G=(V,E)$ and proportion $p \in [0,1]$, a set $S\subseteq V$ is a \textit{p-dominating set} if \[\frac{|N[S]|}{|V|} \geq p.\]  The \textit{p-domination number} $\gamma_p(G)$ equals the minimum cardinality of a $p$-dominating set in $G$. 
	\end{definition}
	
	
	For example, we say that a set $S\subseteq V$ is a 1/2-dominating set if $\frac{|N[S]|}{|V|} \geq 1/2.$ The {1/2-domination number} $\gamma_{1/2}(G)$ equals the minimum cardinality of a 1/2-dominating set in $G$. 
	
	We point out that a $\gamma_p$-set is not in general related to a $\gamma$-set. In particular a $\gamma$-set does not necessarily contain a $\gamma_{p}$-set. Equivalently, a $\gamma_{p}$-set cannot necessarily be extended to $\gamma$-set. To see this consider the graph in Figure \ref{ex:Disjoint} where the $\gamma$-set denoted by triangles is disjoint from $\gamma_{1/2}$-set consisting of just the square vertex.

	\begin{figure}[hh] 
		\centering
		\begin{tikzpicture}
		\node [draw,rectangle] (A) at (0,0) {};
		\node [draw,regular polygon, regular polygon sides=3,inner sep=1.5pt] (B) at (1,0) {};
		\node [draw,regular polygon, regular polygon sides=3,inner sep=1.5pt] (C) at (.707,.707) {};
		\node [draw,regular polygon, regular polygon sides=3,inner sep=1.5pt] (D) at (0,1) {};
		\node [draw,regular polygon, regular polygon sides=3,inner sep=1.5pt] (E) at (-.707,.707) {};
		\node [draw,regular polygon, regular polygon sides=3,inner sep=1.5pt] (F) at (-1,0) {};
		\node [draw,regular polygon, regular polygon sides=3,inner sep=1.5pt] (G) at (-.707,-.707) {};
		\node [draw,regular polygon, regular polygon sides=3,inner sep=1.5pt] (H) at (0,-1) {};
		\node [draw,regular polygon, regular polygon sides=3,inner sep=1.5pt] (I) at (.707,-.707) {};
		\node [draw,circle] (J) at (2,0) {};
		\node [draw,circle] (K) at (1.414,1.414) {};
		\node [draw,circle] (L) at (0,2) {};
		\node [draw,circle] (M) at (-1.414,1.414) {};
		\node [draw,circle] (N) at (-2,0) {};
		\node [draw,circle] (O) at (-1.414,-1.414) {};
		\node [draw,circle] (P) at (0,-2) {};
		\node [draw,circle] (Q) at (1.414,-1.414) {};

		\draw[line width=1pt] (A) edge (B);
		\draw[line width=1pt] (B) edge (J);
		\draw[line width=1pt] (A) edge (C);
		\draw[line width=1pt] (C) edge (K);
		\draw[line width=1pt] (A) edge (D);
		\draw[line width=1pt] (D) edge (L);
		\draw[line width=1pt] (A) edge (E);
		\draw[line width=1pt] (E) edge (M);
		\draw[line width=1pt] (A) edge (F);
		\draw[line width=1pt] (F) edge (N);
		\draw[line width=1pt] (A) edge (G);
		\draw[line width=1pt] (G) edge (O);
		\draw[line width=1pt] (A) edge (H);
		\draw[line width=1pt] (H) edge (P);
		\draw[line width=1pt] (A) edge (I);
		\draw[line width=1pt] (I) edge (Q);
		\end{tikzpicture}
		\caption{The $\gamma$-set denoted by triangles is disjoint from the $\gamma_{1/2}$-set consisting of just the square vertex. \label{ex:Disjoint}}
	\end{figure}
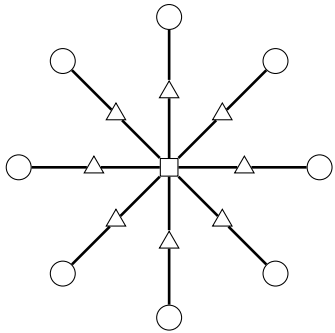

	We note that the $p$-domination number should not be confused with the well-studied \emph{fractional domination number}, denoted $\gamma_f(G)$, that is defined as follows. Let $f: V \to [0,1]$ be a function which assigns to each vertex $v\in V$ a rational number in the unit interval $[0,1]$. A function $f$ is called a \emph{dominating function} if for every $v\in V$, $f(N[v])\geq 1$, that is, the sum of the values $f(w)$ for every $w\in N[v]$ is greater than or equal to 1. The \emph{weight} of a fractional dominating function is simply the sum of all values $f(v)$ for every vertex $v\in V$. The \emph{fractional domination number} $\gamma_f(G)$ equals the minimum weight of a fractional dominating function on $G$. Fractional domination was introduced by Hedetniemi et al. in 1987 \cite{Hed1987} and has received considerable study since then.  The reader is referred to a chapter on fractional domination by Domke et al. in \cite{Domke} and the PhD thesis on fractional domination by Rubalcaba in 2005 \cite{Rubalcaba}.
	
	The partial domination number $\gamma_{p}(G)$ should also not be confused with the $\alpha$\emph{-domination number} $\gamma_\alpha(G),$ for a given value $\alpha \in [0,1]$, which is defined as the minimum cardinality of a set $S$ having the property that for every vertex $v\in V$, $|N[v]\cap S|/N[v]\geq \alpha$, that is, the set $S$ dominates at least the fraction $\alpha$ of the vertices in every closed neighborhood $N[v]$. Alpha domination was introduced by  Dunbar et al. in 2000 \cite{Dunbar}.  The interested reader is referred to a recent paper on alpha domination by Jafari Rad and Volkmann in 2016 \cite{Rad}.

\section{Examples and $p$-Domination for Classes of Graphs}
\label{sec:Classes}

	We begin our study of $p$-domination in graphs by considering some motivating examples and classes of graphs. In particular we will determine $\gamma_p$ for complete multipartite graphs, cycles, paths, grid graphs, and cylinders. To begin we give the following simple example. 
	
	\begin{example}\label{ex:Path}
		Consider the path on six vertices, a $1/2$-dominating set is given by taking any vertex that is not a leaf, see Figure \ref{fig:Path}.  We point out again using this example that each vertex in a $p$-dominating set dominates itself and its neighbors. 
	\end{example}	
	
		\begin{figure}[hh]
		\centering
			\begin{tikzpicture}[every loop/.style={}]
			\draw (0,1) circle (.15) node (A) {};
			\filldraw (1,1) circle (.15) node (B) {};
			\draw (2,1) circle (.15) node (C) {}
			(3,1) circle (.15) node (D) {}
			(4,1) circle (.15) node (E) {}
			(5,1) circle (.15) node (F)  {};
			\draw[line width=1pt] (A) edge (B);
			\draw[line width=1pt] (B) edge (C);
			\draw[line width=1pt] (C) edge (D);
			\draw[line width=1pt] (D) edge (E);
			\draw[line width=1pt] (E) edge (F);
			\end{tikzpicture}
		\caption{The path on six vertices, the shaded vertex gives a 1/2-dominating set. \label{fig:Path}}
		\end{figure}
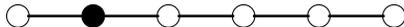
	
	We now consider complete bipartite graphs. 
	\begin{proposition}\label{prop:CompleteBi}
	In any complete bipartite graph $K_{m,n}$ one can find a $1/2$-dominating set by simply choosing one vertex from the side with fewer vertices.  
	\end{proposition}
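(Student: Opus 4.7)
The plan is to unpack Definition~\ref{def:PDom} directly: one picks an arbitrary vertex $v$ from the smaller side of the bipartition, computes $|N[v]|$, and checks that the ratio $|N[v]|/|V|$ is at least $1/2$.

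First I would fix notation by writing $K_{m,n}=(V,E)$ with bipartition $V = A \cup B$, $|A|=m$, $|B|=n$, and assuming without loss of generality that $m \leq n$ so that $A$ is (one of) the side(s) of minimum cardinality. Then I would let $v \in A$ be any vertex and observe that in the complete bipartite graph every vertex of $A$ is adjacent to every vertex of $B$. Hence
\[
N[v] = \{v\} \cup B, \qquad |N[v]| = 1 + n.
\]

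Next, to verify the $1/2$-domination condition, I would compute
\[
\frac{|N[v]|}{|V|} \;=\; \frac{1+n}{m+n},
\]
and reduce the inequality $\tfrac{1+n}{m+n}\geq \tfrac12$ to $n+2 \geq m$. Since $m\leq n$ by our choice of the smaller side, $n+2\geq m+2>m$, so the inequality holds. This shows that $\{v\}$ is a $1/2$-dominating set, which is exactly the content of the proposition.

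There is no real obstacle here; the statement is essentially immediate from the definition once one notes that a single vertex on the smaller side already reaches all $n$ vertices on the other side. If anything, the only thing to be careful about is handling the degenerate case $m=0$ (i.e.\ $K_{0,n}$, which is edgeless), but under the usual convention that $K_{m,n}$ has $m,n\geq 1$ this does not arise, and the proof above goes through verbatim.
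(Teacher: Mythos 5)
Your proof is correct and is exactly the computation the paper leaves implicit: the paper states this proposition without proof, offering only the $K_{3,5}$ example, and your verification that $|N[v]| = n+1$ with $(1+n)/(m+n) \geq 1/2$ reducing to $n+2 \geq m$ (true since $m \leq n$) supplies the missing details. Nothing further is needed.
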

	
	\begin{example}\label{ex:Complete3,5}
		For example consider $K_{3,5}$ show in Figure \ref{fig:Complete3,5}. Let $S=\{v\}$ a vertex from the part with three vertices, then $\frac{|N[v]|}{|V|}=\frac{6}{8}$. 
		\end{example}
	
		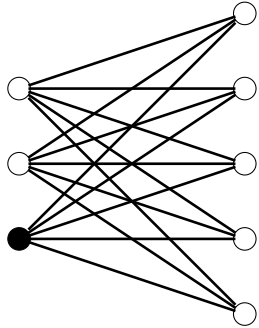
\begin{figure}[hh]
		\centering
			\begin{tikzpicture}[every loop/.style={}]
			\filldraw
			(0,2) circle (.15) node (A) {};
			\draw
			(0,3) circle (.15) node (B) {}
			(0,4) circle (.15) node (C) {}
			
			(3,1) circle (.15) node (D) {}
			(3,2) circle (.15) node (E) {}
			(3,3) circle (.15) node (F) {}
			(3,4) circle (.15) node (G) {}
			(3,5) circle (.15) node (H) {};
			
			\draw[line width=1pt] (A) edge (D);
			\draw[line width=1pt] (A) edge (E);
			\draw[line width=1pt] (A) edge (F);
			\draw[line width=1pt] (A) edge (G);
			\draw[line width=1pt] (A) edge (H);
			
			\draw[line width=1pt] (B) edge (D);
			\draw[line width=1pt] (B) edge (E);
			\draw[line width=1pt] (B) edge (F);
			\draw[line width=1pt] (B) edge (G);
			\draw[line width=1pt] (B) edge (H);
			
			\draw[line width=1pt] (C) edge (D);
			\draw[line width=1pt] (C) edge (E);
			\draw[line width=1pt] (C) edge (F);
			\draw[line width=1pt] (C) edge (G);
			\draw[line width=1pt] (C) edge (H);	
			\end{tikzpicture}
		\caption{The graph $K_{3,5}$ with a one vertex 1/2-dominating set, indicated by the shaded vertex. \label{fig:Complete3,5}}
		\end{figure}
We can generalize this to any complete multipartite graph. 	
\begin{proposition}\label{prop:CompleteMulti}
	In any complete multipartite graph $K_{m_1,m_2,...,m_k}$ one can find a $1/2$-dominating set by simply choosing one vertex from the independent set with the fewest vertices.  
\end{proposition}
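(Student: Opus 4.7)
The plan is to directly verify the fraction $|N[v]|/|V|$ for the proposed singleton $S=\{v\}$, where $v$ is chosen from the smallest part of the multipartite graph. First I would set up notation: assume without loss of generality $m_1 \leq m_2 \leq \cdots \leq m_k$, let $n = m_1 + m_2 + \cdots + m_k$, and let $V_1$ denote the part of size $m_1$ from which $v$ is chosen. Since $K_{m_1,\ldots,m_k}$ is complete multipartite, $v$ is adjacent to every vertex outside $V_1$, so $N[v] = \{v\} \cup (V \setminus V_1)$ and therefore $|N[v]| = n - m_1 + 1$.

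Next I would bound $m_1$ from above. Because $m_1$ is the minimum of the part sizes, $m_1 \leq m_i$ for every $i$, hence $k \cdot m_1 \leq n$, i.e.\ $m_1 \leq n/k$. Assuming $k \geq 2$ (which is necessary for the graph to have any edges and for $1/2$-domination by a single vertex to be meaningful), this yields $m_1 \leq n/2$.

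Combining the two observations gives
\[
\frac{|N[v]|}{|V|} = \frac{n - m_1 + 1}{n} \geq \frac{n - n/2 + 1}{n} = \frac{1}{2} + \frac{1}{n} > \frac{1}{2},
\]
which verifies that $\{v\}$ is a $1/2$-dominating set. I expect no genuine obstacle here; the only subtlety is handling the degenerate case $k=1$ (an empty graph, in which case no single vertex $1/2$-dominates unless $n \leq 2$), which should be excluded by convention or noted explicitly. The proof is essentially the same computation as in Proposition~\ref{prop:CompleteBi}, generalized by replacing ``side with fewer vertices'' with ``smallest part.''
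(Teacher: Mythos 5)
Your proof is correct. The paper actually states this proposition without any proof at all (only the worked example for $K_{3,5}$ illustrates the bipartite special case), so your direct computation --- $|N[v]| = n - m_1 + 1$ together with $m_1 \leq n/k \leq n/2$ for $k \geq 2$ --- is exactly the argument the authors leave implicit, and your remark about the degenerate case $k=1$ is a worthwhile addition they do not address.
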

\begin{example}\label{ex:Cycle}
Consider the cycle on twelve vertices. Any choice of two vertices, with disjoint closed neighborhoods, will form a 1/2-dominating set, see Figure \ref{fig:Cycle}.
\end{example}
	
	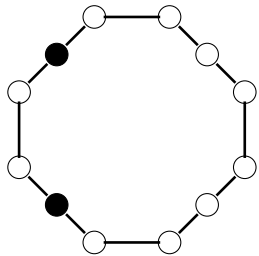
\begin{figure}
	\centering
		\begin{tikzpicture}[every loop/.style={}]
		
		\draw
		(1,0) circle (.15) node (A) {};
		\filldraw
		(.5,.5) circle (.15) node (B) {};
		\draw
		(0,1) circle (.15) node (C) {};
		\draw 
		(0,2) circle (.15) node (D) {};
		\filldraw	
		(.5,2.5) circle (.15) node (E) {};
		\draw
		(1,3) circle (.15) node (F) {}
		(2,3) circle (.15) node (G) {}
		(2.5,2.5) circle (.15) node (H) {}
		(3,2) circle (.15) node (I) {}
		(3,1) circle (.15) node (J) {}
		(2.5,.5) circle (.15) node (K) {}
		(2,0) circle (.15) node (L) {};
		
		\draw[line width=1pt] (A) edge (B);
		\draw[line width=1pt] (B) edge (C);
		\draw[line width=1pt] (C) edge (D);
		\draw[line width=1pt] (D) edge (E);
		\draw[line width=1pt] (E) edge (F);
		\draw[line width=1pt] (F) edge (G);
		\draw[line width=1pt] (G) edge (H);
		\draw[line width=1pt] (H) edge (I);
		\draw[line width=1pt] (I) edge (J);
		\draw[line width=1pt] (J) edge (K);
		\draw[line width=1pt] (K) edge (L);
		\draw[line width=1pt] (L) edge (A);
		\end{tikzpicture}
	\caption{The cycle on 12 vertices with a 1/2-dominating set, indicated by the shaded vertices. \label{fig:Cycle}}
\end{figure}

Generalizing this, for paths and cycles we can get precise statements about $\gamma_{1/2}(G)$. 

\begin{proposition}\label{prop:Cycles}
	For a cycle of length $n$, \[\gamma_{1/2}(C_n) = \lceil n/6 \rceil. \]
\end{proposition}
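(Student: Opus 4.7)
The plan is to prove the equality by matching upper and lower bounds, each of which exploits the fact that every vertex of $C_n$ has closed neighborhood of size exactly $3$.

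For the lower bound, I would observe that for any set $S \subseteq V(C_n)$ we have the trivial bound $|N[S]| \leq \sum_{v \in S} |N[v]| = 3|S|$. If $S$ is a $1/2$-dominating set, then $3|S| \geq |N[S]| \geq n/2$, so $|S| \geq n/6$, and since $|S|$ is an integer, $|S| \geq \lceil n/6 \rceil$. This gives $\gamma_{1/2}(C_n) \geq \lceil n/6 \rceil$.

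For the upper bound, label the vertices $v_0, v_1, \ldots, v_{n-1}$ cyclically, set $k = \lceil n/6 \rceil$, and consider the set $S = \{v_0, v_3, v_6, \ldots, v_{3(k-1)}\}$. The key claim is that the closed neighborhoods of the vertices in $S$ are pairwise disjoint, so $|N[S]| = 3k \geq n/2$ and $S$ is a $1/2$-dominating set of the desired cardinality. Two consecutive chosen vertices are at distance exactly $3$, so their closed neighborhoods just touch but do not overlap; the only subtle case is the wrap-around distance from $v_{3(k-1)}$ back to $v_0$, which equals $n - 3(k-1)$, and one needs $n - 3(k-1) \geq 3$, i.e. $n \geq 3k$.

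The main obstacle, then, is verifying $n \geq 3\lceil n/6 \rceil$. From $\lceil n/6 \rceil < n/6 + 1$ one gets $3\lceil n/6 \rceil < n/2 + 3$, which is at most $n$ as soon as $n \geq 6$. I would handle $n \in \{3,4,5\}$ separately: in each of these cases $\lceil n/6 \rceil = 1$ and a single vertex dominates $3 \geq n/2$ vertices, so the statement holds trivially. Combining the two bounds yields $\gamma_{1/2}(C_n) = \lceil n/6 \rceil$.
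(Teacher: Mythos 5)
Your proof is correct and takes essentially the same approach as the paper: the upper bound uses the identical construction of choosing every third vertex along the cycle. Your lower bound via the counting inequality $|N[S]| \leq 3|S|$ is actually more rigorous than the paper's, which only observes that the constructed set dominates each vertex exactly once (i.e.\ that one particular set is non-redundant) rather than showing that no smaller set can dominate $n/2$ vertices; your explicit treatment of the wrap-around gap and the small cases $n \in \{3,4,5\}$ also fills in details the paper leaves implicit.
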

	
\begin{proof}
	Let $G$ be a cycle of length $n$. Choose a vertex anywhere on the cycle; it dominates three vertices: itself and its two neighbors. Now move three vertices in one direction around the cycle and add that vertex to $S$; the number of dominated vertices has gone up by three. Continue moving in the same direction around the cycle choosing every third vertex until you have gone half way around the cycle. Clearly, $S$ dominates at least 1/2 of the vertices of graph. It is minimal because each vertex dominated by $S$ is dominated by only one vertex in $S$. One out of every three on one half of the cycle is in $S$, so one out of every six vertices overall is in $S$. The ceiling is needed if $n$ is not divisible by $3$ to ensure that $S$ dominates at least $1/2$.  
\end{proof}

\begin{proposition}\label{prop:Paths}
	For any path of length $n$, \[\gamma_{1/2}(P_n) = \lceil n/6 \rceil.\]
\end{proposition}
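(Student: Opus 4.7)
The plan is to establish matching lower and upper bounds of $\lceil n/6 \rceil$, in direct parallel with the cycle proof (Proposition~\ref{prop:Cycles}), the only new wrinkle being that the path has two leaves of degree $1$.

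For the lower bound, I would use the observation that every vertex $v$ of $P_n$ satisfies $|N[v]| \leq 3$, so for any $S \subseteq V$ we have $|N[S]| \leq 3|S|$. If $S$ is a $1/2$-dominating set then $|N[S]| \geq n/2$, giving $|S| \geq n/6$, and since $|S|$ is an integer, $|S| \geq \lceil n/6 \rceil$.

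For the upper bound, label the vertices $v_1, v_2, \ldots, v_n$ in order along the path. If $n \leq 2$, the single vertex $\{v_1\}$ already dominates everything, so I focus on $n \geq 3$ and set $k := \lceil n/6 \rceil$. I would then take
\[
S \;=\; \{v_2,\, v_5,\, v_8,\, \ldots,\, v_{3k-1}\},
\]
an arithmetic progression of interior vertices with common difference $3$. A short case analysis on $n \bmod 6$ shows that $3k \leq n$, so the largest index $3k-1$ is at most $n-1$, meaning every chosen vertex is interior and hence has closed neighborhood of size exactly $3$. Because consecutive chosen indices differ by $3$, the closed neighborhoods are pairwise disjoint, so $|N[S]| = 3k = 3\lceil n/6 \rceil \geq n/2$. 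Thus $S$ is a $1/2$-dominating set of size $k$, matching the lower bound.

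The main obstacle, such as it is, lies in the boundary bookkeeping: ensuring the arithmetic progression $2, 5, \ldots, 3k-1$ fits inside $\{1, 2, \ldots, n\}$ so that all chosen vertices are genuinely interior (and hence dominate three vertices apiece). Once that is verified across the six residue classes of $n$ modulo $6$, the remainder of the argument is essentially a transcription of the cycle case, since our chosen vertices play the same role as the uniformly spaced dominators used there.
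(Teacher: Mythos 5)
Your proof is correct, but it takes a different route from the paper's. The paper proves this proposition by reduction to the cycle case (Proposition~\ref{prop:Cycles}): it observes that the $1/2$-dominating set constructed there for $C_n$ avoids some edge, so deleting that edge yields $P_n$ without changing which vertices are dominated, and conversely deleting an edge cannot help domination. You instead give a self-contained argument: an explicit arithmetic-progression construction $\{v_2, v_5, \ldots, v_{3k-1}\}$ of interior vertices with pairwise disjoint closed neighborhoods for the upper bound, and the counting inequality $|N[S]| \leq \sum_{v \in S}|N[v]| \leq 3|S|$ combined with $|N[S]| \geq n/2$ and integrality for the lower bound. Your boundary checks ($3\lceil n/6\rceil \leq n$ for $n \geq 3$, with $n \leq 2$ handled separately) go through. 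What your approach buys is rigor on the lower bound: the paper's cycle proof only argues that its constructed set is \emph{minimal} (``each vertex dominated by $S$ is dominated by only one vertex in $S$''), which does not by itself rule out a smaller $1/2$-dominating set of a different shape, whereas your $3|S| \geq n/2$ bound applies to every candidate set. What the paper's reduction buys is brevity and the reuse of the cycle result. Both are valid; yours is the more complete argument as written.
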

\begin{proof} This follows from the proof of Proposition \ref{prop:Cycles} if we think of a path as a cycle with an edge deleted. More precisely, for a cycle of length $n$ there is a 1/2-dominating set that is not incident to some edge.  One such 1/2-dominating set was shown by construction in the poof of Proposition \ref{prop:Cycles}. This edge can be removed to make the cycle a path without affecting which nodes are dominated.  
\end{proof}

We now turn to considering grid graphs. It had for many years proven difficult to determine formulas for the domination numbers of all grid graphs, 16 different formulas were determined by Goncalves et al. in 2011 \cite{Goncalves}, and more work was done on constructing such $\gamma$-sets in \cite{Hutson}.  It is surprisingly simple, however, to determine the 1/2-domination
number of all grid graphs, as follows.

\begin{theorem}\label{thm:gridgraphs}
	For the $m$-by-$n$ grid graph $P_m \Box P_n$, $m\leq n$, the 1/2-domination
	number is as follows:
	\begin{enumerate}
		\item for $m = 1$, $\gamma_{1/2}(P_n) = \lceil n/6 \rceil$,
		\item for $m = 2$, $\gamma_{1/2}(P_2 \Box P_n) = \lceil n/4 \rceil$,
		\item for $m \geq 3$, $\gamma_{1/2}(P_m \Box P_n) = \lceil mn/10 \rceil$.
	\end{enumerate}

\end{theorem}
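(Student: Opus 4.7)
The proof splits along the three cases of the statement. Case~(1) is immediate from Proposition \ref{prop:Paths}, since $P_1\Box P_n\cong P_n$. In each of the remaining cases the lower bound comes from a short closed-neighborhood count, and the bulk of the work is in an explicit construction.

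For Case~(2), every vertex of $P_2\Box P_n$ has degree at most $3$, so its closed neighborhood has at most $4$ vertices; hence any $1/2$-dominating set $S$ satisfies $4|S|\geq|N[S]|\geq n$, giving $|S|\geq\lceil n/4\rceil$. For the matching construction I would take vertices in row~$1$ at columns $2,6,10,\ldots,4\lfloor n/4\rfloor-2$, augmented by a single extra vertex near column~$n$ when $n\not\equiv 0\pmod 4$. Each chosen vertex $(1,4k+2)$ has a $4$-element closed neighborhood consisting of three consecutive row-$1$ vertices together with the vertex directly below, and consecutive such neighborhoods are pairwise disjoint, so $|S|$ vertices dominate at least $n$ vertices.

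For Case~(3), every vertex has degree at most $4$, so closed neighborhoods have at most $5$ vertices and the same counting gives $|S|\geq\lceil mn/10\rceil$. The matching construction is the main obstacle. The plan is to pack pairwise disjoint ``plus pentominoes'' centered at interior vertices, since a collection of $\lceil mn/10\rceil$ such disjoint plus-shapes automatically dominates $5\lceil mn/10\rceil\geq mn/2$ vertices. The classical Lee-sphere pattern $\{(i,j):i+2j\equiv 0\pmod 5\}$ partitions $\Z^2$ into disjoint plus-shapes at density $1/5$; restricting further to every other row of these centers (for instance those with $i$ even) preserves disjointness and lowers the density to exactly~$1/10$. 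I would use this as the starting skeleton inside the rectangle $P_m\Box P_n$.

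The delicate part is the boundary adjustment. Lee centers within one step of the edge of the rectangle have plus-shapes that partly fall outside and must be either shifted inward or replaced by alternative dominators that pick up enough previously-uncovered vertices without pushing the count past $\lceil mn/10\rceil$. I would organize the argument by first handling the ``fits-perfectly'' range $m,n\equiv 0\pmod{10}$, then extending by induction on $n$ via $m$-by-$10$ column strips (each strip admits $m$ new plus-centers dominating $5m$ new vertices, and the target $\lceil mn/10\rceil$ also grows by exactly $m$), and finally closing out the finitely many residue pairs $(m\bmod 10,\,n\bmod 10)$ near the boundary by an explicit small-grid check. The ceiling in $\lceil mn/10\rceil$ is precisely what absorbs the slack introduced at these residues, so this case analysis---while tedious---will not produce a larger bound.
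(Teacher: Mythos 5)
Your overall strategy is the same as the paper's: the lower bound comes from the fact that a closed neighborhood has at most $4$ (resp.\ $5$) vertices, and the upper bound from packing disjoint closed neighborhoods of that maximal size. The difference is one of completeness: the paper's proof of parts (2) and (3) is a two-line sketch that asserts the packing exists and points to figures for $P_2\Box P_{12}$ and $P_4\Box P_{12}$, and it never states the lower-bound count at all, whereas you make the count explicit and try to describe the packing in general. Your case (2) construction is correct, and your strip-induction scheme for case (3) is sound in outline: since $m\leq n$, peeling off $10$-column strips (each contributing $m$ new disjoint plus-shapes, while $\lceil mn/10\rceil$ grows by exactly $m$) does reduce everything to finitely many base cases with $3\leq m\leq n\leq 12$.

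There is, however, one genuine subtlety that both you and the paper gloss over: $\lceil mn/10\rceil$ pairwise disjoint closed neighborhoods of size $5$ do \emph{not} always exist, so the clean ``disjoint plus-pentomino'' picture breaks in exactly the base cases you defer to a check. For $P_4\Box P_4$ one needs $\lceil 16/10\rceil=2$ centers, but the only degree-$4$ vertices form a $2\times 2$ interior block and no two of them have disjoint closed neighborhoods; one must instead take, say, $(2,2)$ and $(3,3)$, whose neighborhoods overlap in two vertices and whose union has exactly $8=16/2$ elements. Similarly in $P_5\Box P_5$ only two disjoint interior pluses fit, and the required third center forces an overlap, with the union count $15-2=13\geq\lceil 25/2\rceil$ again exactly tight. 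So your ``small-grid check'' cannot simply verify that the Lee-pattern centers survive the boundary adjustment; it must switch to counting $|N[S]|$ directly with overlaps allowed, and several of these cases have no slack whatsoever. This is a real (if repairable) gap in your write-up --- and, for what it is worth, the paper's blanket claim that the closed neighborhoods can always be taken disjoint is false as stated for the same reason.
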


\begin{proof}
	Statement (1) follows from Proposition \ref{prop:Paths}.
	For (2) we can always construct a dominating set such that the closed neighborhoods of the points in the $\gamma_{1/2}$-set are disjoint and each vertex dominates four vertices, see Figure \ref{pf:case2}.
	For (3) we can always construct a dominating set such that the closed neighborhoods of the points in the $\gamma_{1/2}$-set are disjoint and each vertex dominates five vertices, see Figure \ref{pf:case3}.
\end{proof}

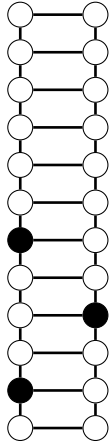
\begin{figure}[hh]
	\centering
	\begin{tikzpicture}
	\node [draw,circle] (A) at (0,0) {};
	\node [draw,circle,fill] (B) at (0,.5) {};
	\node [draw,circle] (C) at (0,1) {};
	\node [draw,circle] (D) at (0,1.5) {};
	\node [draw,circle] (E) at (0,2) {};
	\node [draw,circle,fill] (F) at (0,2.5) {};
	\node [draw,circle] (G) at (0,3) {};
	\node [draw,circle] (H) at (0,3.5) {};
	\node [draw,circle] (I) at (0,4) {};
	\node [draw,circle] (J) at (0,4.5) {};
	\node [draw,circle] (K) at (0,5) {};
	\node [draw,circle] (L) at (0,5.5) {};
	\node [draw,circle] (A2) at (1,0) {};
	\node [draw,circle] (B2) at (1,.5) {};
	\node [draw,circle] (C2) at (1,1) {};
	\node [draw,circle,fill] (D2) at (1,1.5) {};
	\node [draw,circle] (E2) at (1,2) {};
	\node [draw,circle] (F2) at (1,2.5) {};
	\node [draw,circle] (G2) at (1,3) {};
	\node [draw,circle] (H2) at (1,3.5) {};
	\node [draw,circle] (I2) at (1,4) {};
	\node [draw,circle] (J2) at (1,4.5) {};
	\node [draw,circle] (K2) at (1,5) {};
	\node [draw,circle] (L2) at (1,5.5) {};
	\draw[line width=1pt] (A) edge (A2);
	\draw[line width=1pt] (B) edge (B2);
	\draw[line width=1pt] (C) edge (C2);
	\draw[line width=1pt] (D) edge (D2);
	\draw[line width=1pt] (E) edge (E2);
	\draw[line width=1pt] (F) edge (F2);
	\draw[line width=1pt] (G) edge (G2);
	\draw[line width=1pt] (H) edge (H2);
	\draw[line width=1pt] (I) edge (I2);
	\draw[line width=1pt] (J) edge (J2);
	\draw[line width=1pt] (K) edge (K2);
	\draw[line width=1pt] (L) edge (L2);
	\draw[line width=1pt] (A) edge (B);
	\draw[line width=1pt] (B) edge (C);
	\draw[line width=1pt] (C) edge (D);
	\draw[line width=1pt] (D) edge (E);
	\draw[line width=1pt] (E) edge (F);
	\draw[line width=1pt] (F) edge (G);
	\draw[line width=1pt] (G) edge (H);
	\draw[line width=1pt] (H) edge (I);
	\draw[line width=1pt] (I) edge (J);
	\draw[line width=1pt] (J) edge (K);
	\draw[line width=1pt] (K) edge (L);
	\draw[line width=1pt] (A2) edge (B2);
	\draw[line width=1pt] (B2) edge (C2);
	\draw[line width=1pt] (C2) edge (D2);
	\draw[line width=1pt] (D2) edge (E2);
	\draw[line width=1pt] (E2) edge (F2);
	\draw[line width=1pt] (F2) edge (G2);
	\draw[line width=1pt] (G2) edge (H2);
	\draw[line width=1pt] (H2) edge (I2);
	\draw[line width=1pt] (I2) edge (J2);
	\draw[line width=1pt] (J2) edge (K2);
	\draw[line width=1pt] (K2) edge (L2);
	\end{tikzpicture}
	\caption{Theorem \ref{thm:gridgraphs} (2): $P_2\Box P_{12}$ dominated by 3 vertices with disjoint closed neighborhoods.  
	\label{pf:case2}}
\end{figure}

\begin{figure}[hh]
	\centering
	\begin{tikzpicture}
	\node [draw,circle] (A) at (0,0) {};
	\node [draw,circle] (B) at (0,.5) {};
	\node [draw,circle] (C) at (0,1) {};
	\node [draw,circle] (D) at (0,1.5) {};
	\node [draw,circle] (E) at (0,2) {};
	\node [draw,circle] (F) at (0,2.5) {};
	\node [draw,circle] (G) at (0,3) {};
	\node [draw,circle] (H) at (0,3.5) {};
	\node [draw,circle] (I) at (0,4) {};
	\node [draw,circle] (J) at (0,4.5) {};
	\node [draw,circle] (K) at (0,5) {};
	\node [draw,circle] (L) at (0,5.5) {};
	
	\node [draw,circle] (A2) at (1,0) {};
	\node [draw,circle,fill] (B2) at (1,.5) {};
	\node [draw,circle] (C2) at (1,1) {};
	\node [draw,circle] (D2) at (1,1.5) {};
	\node [draw,circle] (E2) at (1,2) {};
	\node [draw,circle,fill] (F2) at (1,2.5) {};
	\node [draw,circle] (G2) at (1,3) {};
	\node [draw,circle] (H2) at (1,3.5) {};
	\node [draw,circle] (I2) at (1,4) {};
	\node [draw,circle,fill] (J2) at (1,4.5) {};
	\node [draw,circle] (K2) at (1,5) {};
	\node [draw,circle] (L2) at (1,5.5) {};
	
	\node [draw,circle] (A3) at (2,0) {};
	\node [draw,circle] (B3) at (2,.5) {};
	\node [draw,circle] (C3) at (2,1) {};
	\node [draw,circle,fill] (D3) at (2,1.5) {};
	\node [draw,circle] (E3) at (2,2) {};
	\node [draw,circle] (F3) at (2,2.5) {};
	\node [draw,circle] (G3) at (2,3) {};
	\node [draw,circle,fill] (H3) at (2,3.5) {};
	\node [draw,circle] (I3) at (2,4) {};
	\node [draw,circle] (J3) at (2,4.5) {};
	\node [draw,circle] (K3) at (2,5) {};
	\node [draw,circle] (L3) at (2,5.5) {};
	
	\node [draw,circle] (A4) at (3,0) {};
	\node [draw,circle] (B4) at (3,.5) {};
	\node [draw,circle] (C4) at (3,1) {};
	\node [draw,circle] (D4) at (3,1.5) {};
	\node [draw,circle] (E4) at (3,2) {};
	\node [draw,circle] (F4) at (3,2.5) {};
	\node [draw,circle] (G4) at (3,3) {};
	\node [draw,circle] (H4) at (3,3.5) {};
	\node [draw,circle] (I4) at (3,4) {};
	\node [draw,circle] (J4) at (3,4.5) {};
	\node [draw,circle] (K4) at (3,5) {};
	\node [draw,circle] (L4) at (3,5.5) {};
	
	\draw[line width=1pt] (A) edge (A2);
	\draw[line width=1pt] (B) edge (B2);
	\draw[line width=1pt] (C) edge (C2);
	\draw[line width=1pt] (D) edge (D2);
	\draw[line width=1pt] (E) edge (E2);
	\draw[line width=1pt] (F) edge (F2);
	\draw[line width=1pt] (G) edge (G2);
	\draw[line width=1pt] (H) edge (H2);
	\draw[line width=1pt] (I) edge (I2);
	\draw[line width=1pt] (J) edge (J2);
	\draw[line width=1pt] (K) edge (K2);
	\draw[line width=1pt] (L) edge (L2);
	
	\draw[line width=1pt] (A2) edge (A3);
	\draw[line width=1pt] (B2) edge (B3);
	\draw[line width=1pt] (C2) edge (C3);
	\draw[line width=1pt] (D2) edge (D3);
	\draw[line width=1pt] (E2) edge (E3);
	\draw[line width=1pt] (F2) edge (F3);
	\draw[line width=1pt] (G2) edge (G3);
	\draw[line width=1pt] (H2) edge (H3);
	\draw[line width=1pt] (I2) edge (I3);
	\draw[line width=1pt] (J2) edge (J3);
	\draw[line width=1pt] (K2) edge (K3);
	\draw[line width=1pt] (L2) edge (L3);
	
	\draw[line width=1pt] (A3) edge (A4);
	\draw[line width=1pt] (B3) edge (B4);
	\draw[line width=1pt] (C3) edge (C4);
	\draw[line width=1pt] (D3) edge (D4);
	\draw[line width=1pt] (E3) edge (E4);
	\draw[line width=1pt] (F3) edge (F4);
	\draw[line width=1pt] (G3) edge (G4);
	\draw[line width=1pt] (H3) edge (H4);
	\draw[line width=1pt] (I3) edge (I4);
	\draw[line width=1pt] (J3) edge (J4);
	\draw[line width=1pt] (K3) edge (K4);
	\draw[line width=1pt] (L3) edge (L4);
	
	\draw[line width=1pt] (A) edge (B);
	\draw[line width=1pt] (B) edge (C);
	\draw[line width=1pt] (C) edge (D);
	\draw[line width=1pt] (D) edge (E);
	\draw[line width=1pt] (E) edge (F);
	\draw[line width=1pt] (F) edge (G);
	\draw[line width=1pt] (G) edge (H);
	\draw[line width=1pt] (H) edge (I);
	\draw[line width=1pt] (I) edge (J);
	\draw[line width=1pt] (J) edge (K);
	\draw[line width=1pt] (K) edge (L);
	
	\draw[line width=1pt] (A2) edge (B2);
	\draw[line width=1pt] (B2) edge (C2);
	\draw[line width=1pt] (C2) edge (D2);
	\draw[line width=1pt] (D2) edge (E2);
	\draw[line width=1pt] (E2) edge (F2);
	\draw[line width=1pt] (F2) edge (G2);
	\draw[line width=1pt] (G2) edge (H2);
	\draw[line width=1pt] (H2) edge (I2);
	\draw[line width=1pt] (I2) edge (J2);
	\draw[line width=1pt] (J2) edge (K2);
	\draw[line width=1pt] (K2) edge (L2);
	
	\draw[line width=1pt] (A3) edge (B3);
	\draw[line width=1pt] (B3) edge (C3);
	\draw[line width=1pt] (C3) edge (D3);
	\draw[line width=1pt] (D3) edge (E3);
	\draw[line width=1pt] (E3) edge (F3);
	\draw[line width=1pt] (F3) edge (G3);
	\draw[line width=1pt] (G3) edge (H3);
	\draw[line width=1pt] (H3) edge (I3);
	\draw[line width=1pt] (I3) edge (J3);
	\draw[line width=1pt] (J3) edge (K3);
	\draw[line width=1pt] (K3) edge (L3);
	
	\draw[line width=1pt] (A4) edge (B4);
	\draw[line width=1pt] (B4) edge (C4);
	\draw[line width=1pt] (C4) edge (D4);
	\draw[line width=1pt] (D4) edge (E4);
	\draw[line width=1pt] (E4) edge (F4);
	\draw[line width=1pt] (F4) edge (G4);
	\draw[line width=1pt] (G4) edge (H4);
	\draw[line width=1pt] (H4) edge (I4);
	\draw[line width=1pt] (I4) edge (J4);
	\draw[line width=1pt] (J4) edge (K4);
	\draw[line width=1pt] (K4) edge (L4);
	\end{tikzpicture} 
	
	\caption{Theorem \ref{thm:gridgraphs} (3): $P_4\Box P_{12}$ dominated by 5 vertices with disjoint closed neighborhoods.  
	\label{pf:case3}}
\end{figure}

For grid graphs it is interesting to compare the formulas for domination to those for $p$-domination in order to see
just what the savings are when one only needs to dominate half of the vertices.
From \cite{Goncalves} for $m,n\geq 16$ 
\[\gamma(P_m \Box P_n) =\left  \lfloor \frac{(m+2)(n+2)}{5} \right \rfloor -4. \]
If one considers the ratio 
\[\frac{\gamma_{1/2}(P_m \Box P_n)}{\gamma(P_m \Box P_n)} \]
as $m,n$ grow large, the ratio approaches 0.5, but for not too large $m,n$ one can dominate half of the vertices with fewer than half of a the vertices in a dominating set.  It also follows from Theorem \ref{thm:gridgraphs} that the same formulas for $\gamma_{1/2}$-sets hold for cylinders. 
\begin{corollary}
	For the $m$-by-$n$ cylinder graph $C_m \Box C_n$, $m\leq n$, the 1/2-domination
	number is as follows:
	\begin{enumerate}
		\item for $m = 1$, $\gamma_{1/2}(C_n) = \lceil n/6 \rceil$,
		\item for $m = 2$, $\gamma_{1/2}(C_2 \Box C_n) = \lceil n/4 \rceil$,
		\item for $m \geq 3$, $\gamma_{1/2}(C_m \Box C_n) = \lceil mn/10 \rceil$.
	\end{enumerate}
\end{corollary}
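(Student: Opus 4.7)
The plan is a two-sided sandwich. First I would use Theorem \ref{thm:gridgraphs} to obtain the upper bound by edge-monotonicity, and then match it with a short degree-counting lower bound that exploits the vertex-transitivity of the torus $C_m \Box C_n$. I do not expect any real obstacle here; the whole corollary should fall out in a few lines.

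For the upper bound, I would observe that $P_m \Box P_n$ is a spanning subgraph of $C_m \Box C_n$: the torus is obtained from the grid by adding the ``wrap-around'' edges in each cycle factor. Because adding edges to a graph can only enlarge closed neighborhoods, any $1/2$-dominating set of $P_m \Box P_n$ is automatically a $1/2$-dominating set of $C_m \Box C_n$. Therefore $\gamma_{1/2}(C_m \Box C_n) \leq \gamma_{1/2}(P_m \Box P_n)$, and Theorem \ref{thm:gridgraphs} supplies the claimed upper bounds $\lceil n/6 \rceil$, $\lceil n/4 \rceil$, and $\lceil mn/10 \rceil$ in cases (1), (2), (3) respectively.

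For the matching lower bound, I would exploit the fact that $C_m \Box C_n$ is regular of degree $d$, with $d=2$ when $m=1$, $d=3$ when $m=2$ (using the convention that $C_2$ is a single edge on two vertices), and $d=4$ when $m \geq 3$. Hence $|N[v]| = d+1$ for every vertex $v$, and for any $S \subseteq V$ we have $|N[S]| \leq (d+1)|S|$. A $1/2$-dominating set must satisfy $|N[S]| \geq mn/2$, and integrality of $|S|$ then forces
\[ |S| \geq \left\lceil \frac{mn}{2(d+1)} \right\rceil, \]
which evaluates to $\lceil n/6 \rceil$, $\lceil n/4 \rceil$, and $\lceil mn/10 \rceil$ in the three cases, exactly matching the upper bound. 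Case (1) is of course already Proposition \ref{prop:Cycles}, so only (2) and (3) genuinely need the counting argument, and the only mild subtlety is fixing the convention for the degenerate cycle $C_2$ in case (2).
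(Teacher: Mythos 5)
Your proof is correct and follows the route the paper intends: the paper offers no argument beyond asserting that the corollary ``follows from Theorem \ref{thm:gridgraphs}'', and your upper bound via the spanning-subgraph relation $P_m \Box P_n \subseteq C_m \Box C_n$ (adding edges only enlarges closed neighborhoods) is exactly how that deduction goes. Your regularity-based lower bound $|N[S]| \leq (d+1)|S|$, which forces $|S| \geq \lceil mn/(2(d+1)) \rceil$, is a welcome addition that the paper leaves entirely implicit (as it also does in the proof of the grid theorem itself), and it is the part that actually upgrades the inequality to the claimed equality.
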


\section{Bounds on the $p$-domination number}\label{sec:Bounds}
	In this section we consider various bounds we can get on the $p$-domination number. 
	First we consider how $\gamma_p(G)$ and $\gamma_q(G)$ relate to each other for two different proportions $p$ and $q$. 
	
	\begin{proposition}\label{prop:p<q}
		Let $0 \leq p < q \leq 1$. Then \[\gamma_p(G) \leq \gamma_q(G).\]
	\end{proposition}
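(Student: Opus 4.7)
The plan is to prove this by a direct set-inclusion argument on the collections of dominating sets: every $q$-dominating set is automatically a $p$-dominating set when $p < q$, so taking the minimum over the smaller collection can only yield a larger (or equal) value.

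First I would let $S$ be any $\gamma_q$-set, so $|S| = \gamma_q(G)$ and, by Definition \ref{def:PDom}, $|N[S]|/|V| \geq q$. Since $p < q$, we immediately get $|N[S]|/|V| \geq q > p$, and hence $S$ is itself a $p$-dominating set. Then $\gamma_p(G)$, being the minimum cardinality over \emph{all} $p$-dominating sets, is at most $|S| = \gamma_q(G)$, which gives the desired inequality.

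There is no real obstacle here; the statement is essentially an unpacking of the definition, relying only on the monotonicity of the threshold condition $|N[S]|/|V|\ge p$ in $p$. The only thing worth noting explicitly is that the collection of $p$-dominating sets contains the collection of $q$-dominating sets when $p<q$, so the minimum cardinality decreases (weakly) as $p$ decreases.
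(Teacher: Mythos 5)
Your proof is correct and takes essentially the same route as the paper, whose entire argument is the observation that every $q$-dominating set is automatically a $p$-dominating set when $p < q$. You simply spell out the definitional details more explicitly; there is no substantive difference.
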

	\begin{proof}
	 The proof follows from the observation that every $q$-dominating set is a $p$-dominating set. Moreover, equality will hold if and only if the $\gamma_p$-set dominates a proportion $q$ of the vertices.
	\end{proof}
	
	Setting $q=1$ gives a relation between classical domination and partial domination:
	
	\begin{corollary}\label{cor:DomVsPDom}
		For $q=1$ partial domination is the same a classical domination, thus we have an upper bound on $\gamma_p(G)$ for all $p$: 
		\[\gamma_p(G) \leq \gamma(G).\]
	\end{corollary}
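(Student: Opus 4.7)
The plan is to derive the corollary as an immediate consequence of Proposition \ref{prop:p<q} once one identifies classical domination with the $p=1$ case of partial domination. My first step would be to verify that $\gamma_1(G) = \gamma(G)$: by Definition \ref{def:PDom}, a set $S$ is a $1$-dominating set precisely when $|N[S]|/|V| \geq 1$, and since $N[S] \subseteq V$ this inequality forces $|N[S]| = |V|$, i.e., $N[S] = V$. That is exactly the requirement that $S$ is a classical dominating set, so the two parameters coincide.

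Given this identification, the bound splits into two cases depending on whether $p$ is strictly less than $1$ or equal to $1$. For $p \in [0,1)$, I would apply Proposition \ref{prop:p<q} directly with $q=1$, yielding $\gamma_p(G) \leq \gamma_1(G) = \gamma(G)$. For $p = 1$ the inequality is an equality by the first step, so the bound is trivial.

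I do not anticipate any real obstacle here; the only subtlety is checking that the proportional inequality $|N[S]|/|V| \geq 1$ genuinely collapses to the combinatorial condition $N[S]=V$, which ensures the boundary case $q=1$ of Proposition \ref{prop:p<q} aligns with the traditional definition of $\gamma(G)$.
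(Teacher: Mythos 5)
Your proposal is correct and follows exactly the paper's route: the paper derives this corollary by setting $q=1$ in Proposition \ref{prop:p<q} and observing that $1$-domination coincides with classical domination. You merely spell out the (easy) verification that $|N[S]|/|V|\geq 1$ forces $N[S]=V$, which the paper leaves implicit.
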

	
	Now we consider some more interesting bounds on $\gamma_p$ coming from the classical domination number, observing that if you only need to dominate half of the vertices of the graph, you will only need at most half of the number of vertices in a $\gamma$-set of $G$ rounded up. 
	\begin{theorem}\label{thm:gamma1/2}
		For any connected graph $G$, \[\gamma_{1/2}(G)\leq \lceil \gamma(G)/2 \rceil .\]
	\end{theorem}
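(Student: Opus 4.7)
The plan is to start with a $\gamma$-set $D = \{v_1, \ldots, v_{\gamma(G)}\}$ and then extract from it a subset $S$ of size at most $\lceil \gamma(G)/2 \rceil$ whose closed neighborhood already covers half of $V$. The key idea is an averaging argument: if we can split the ``domination credit'' of $D$ across its members, then the top half of the members (by credit) will automatically dominate at least half of the graph.

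More concretely, I would first partition $V$ into classes $V_1, V_2, \ldots, V_{\gamma(G)}$ by assigning every vertex $u \in V$ to exactly one vertex $v_i \in D$ with $u \in N[v_i]$ (such a $v_i$ exists because $D$ dominates $G$; we break ties arbitrarily). Then $\sum_{i=1}^{\gamma(G)} |V_i| = |V|$ and $V_i \subseteq N[v_i]$ for each $i$. Next I would relabel the $v_i$ so that $|V_1| \geq |V_2| \geq \cdots \geq |V_{\gamma(G)}|$, and set
\[
S = \{v_1, v_2, \ldots, v_{\lceil \gamma(G)/2 \rceil}\}.
\]

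It remains to verify that $S$ is a $1/2$-dominating set. Since $V_i \subseteq N[v_i] \subseteq N[S]$ for every $i$ with $v_i \in S$, we have $|N[S]| \geq \sum_{i=1}^{\lceil \gamma(G)/2 \rceil} |V_i|$. Because the $|V_i|$ are listed in nonincreasing order, the average of the top $\lceil \gamma(G)/2 \rceil$ values is at least the overall average $|V|/\gamma(G)$, so
\[
\sum_{i=1}^{\lceil \gamma(G)/2 \rceil} |V_i| \geq \lceil \gamma(G)/2 \rceil \cdot \frac{|V|}{\gamma(G)} \geq \frac{|V|}{2}.
\]
Hence $S$ is a $1/2$-dominating set of size at most $\lceil \gamma(G)/2 \rceil$, which yields $\gamma_{1/2}(G) \leq \lceil \gamma(G)/2 \rceil$.

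There is no real obstacle here; the only subtle point is being careful to assign each vertex of $V$ to exactly one member of $D$ before sorting, so that the class sizes sum to $|V|$ exactly (otherwise overlaps between closed neighborhoods could be double-counted in the wrong direction). Connectedness of $G$ is not actually needed for the argument, but is harmless.
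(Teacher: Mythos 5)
Your proof is correct and follows essentially the same route as the paper: the paper proves the more general bound $\gamma_{i/j}(G)\leq \lceil \tfrac{i}{j}\gamma(G)\rceil$ by exactly this device of partitioning $V$ into classes $S_i\subseteq N[v_i]$ over a $\gamma$-set, sorting the classes by size, and applying the averaging argument to the top $\lceil ir/j\rceil$ of them. Your write-up is actually a bit more careful than the paper's (you state the averaging inequality explicitly rather than gesturing at the ``worst case''), and your remark that connectedness is not needed is accurate.
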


	 Note that without the ceiling on the right side of the inequality, this fails for a complete graph $K_n$ since one node is needed in a $\gamma$-set and still one node in a $\gamma_{1/2}$-set. The proof of this follows from the following more general statement.
	\begin{theorem}\label{thm:gammai/j}
		For any connected graph $G$, $\gamma_{i/j}(G)\leq \lceil i/j\gamma(G) \rceil$.
	\end{theorem}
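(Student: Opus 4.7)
The plan is to start with a minimum dominating set of $G$, partition the vertex set among its members so that each vertex is ``charged'' to exactly one dominator, and then keep the top $\lceil (i/j)\gamma(G)\rceil$ dominators ranked by how many vertices they are charged with.

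First, I would fix a $\gamma$-set $D = \{v_1, v_2, \ldots, v_k\}$ with $k = \gamma(G)$. Since $N[D] = V$, every $u \in V$ lies in $N[v_\ell]$ for at least one $\ell$, so (breaking ties arbitrarily) I can build a partition $V = V_1 \sqcup V_2 \sqcup \cdots \sqcup V_k$ with $V_\ell \subseteq N[v_\ell]$ for every $\ell$. Reorder so that $|V_1| \geq |V_2| \geq \cdots \geq |V_k|$, and set $m = \lceil (i/j)\gamma(G)\rceil$ and $S = \{v_1, \ldots, v_m\}$.

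Next, I would observe that $N[S] \supseteq V_1 \cup \cdots \cup V_m$, so
\[
|N[S]| \;\geq\; \sum_{\ell=1}^{m} |V_\ell|.
\]
A simple averaging argument finishes the job: because $|V_1|, \ldots, |V_m|$ are the $m$ largest of the $k$ parts, each is at least the average $|V|/k$, and therefore
\[
\sum_{\ell=1}^{m} |V_\ell| \;\geq\; \frac{m}{k}\,|V| \;\geq\; \frac{i}{j}\,|V|,
\]
where the last inequality uses $m = \lceil (i/j)k\rceil \geq (i/j)k$. Hence $|N[S]|/|V| \geq i/j$, so $S$ is an $(i/j)$-dominating set of cardinality $m = \lceil (i/j)\gamma(G)\rceil$, yielding the claimed bound.

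There is not really a hard step here; the only thing to be a little careful about is the partition, since closed neighborhoods of vertices in $D$ can overlap, and one needs to assign each vertex to a single dominator so that the selected sizes genuinely sum to a lower bound on $|N[S]|$. Connectedness of $G$ is not actually needed for the argument, and Theorem \ref{thm:gamma1/2} follows as the special case $i/j = 1/2$.
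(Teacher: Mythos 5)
Your overall strategy is exactly the paper's: partition $V$ among the members of a $\gamma$-set so that each part lies in the closed neighborhood of its dominator, sort the parts by size, and keep the $m=\lceil (i/j)\gamma(G)\rceil$ dominators owning the largest parts. However, the justification you give for the key averaging step is false as stated. It is not true that each of the $m$ largest parts is at least the overall average $|V|/k$: with part sizes $10,1,1,1,1$ (so $k=5$ and average $14/5$) and $m=2$, the second-largest part has size $1$, well below the average. What is true --- and what the paper's proof invokes --- is that the \emph{average} of the $m$ largest parts is at least the average of all $k$ parts, since discarding the $k-m$ smallest values cannot decrease the mean. That correct statement already yields
\[
\sum_{\ell=1}^{m} |V_\ell| \;\geq\; \frac{m}{k}\,|V| \;\geq\; \frac{i}{j}\,|V|,
\]
which is all you need. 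With that one repair your argument is complete and coincides with the paper's; your remarks that ties in the partition may be broken arbitrarily and that connectedness of $G$ plays no role in the argument are both correct.
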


	\begin{proof}
		Given a $\gamma$-set $B=\{v_1,...,v_r\}$, partition $V$ into sets $S_1,...,S_r$ such that $S_i\subseteq N[v_i], v_i\in S_i$. Without lost of generality, $|S_1|\geq \cdots \geq |S_r|.$ Define $B' = \{ v_1,...,v_{\lceil {ir}/{j}  \rceil }\}.$
		
		Claim: \[\left| \bigcup_{k = 1}^{\lceil ir/j \rceil } S_k  \right| \geq i/j |V|.\]
		By construction \[\left| \bigcup_{k = 1}^{\lceil ir/j \rceil } S_k  \right| + \left| \bigcup_{k = \lceil ir/j \rceil +1}^{r } S_k  \right| = |V|. \]
		
		Since the average size of $S_k$, $k=1,...,\lceil ri/j \rceil, $ is at least the averages size of all $S_k$'s, the result follows because at worst $|S_k|=|S_\ell|$ for all $k \neq \ell$ and here $\left| \bigcup_{k = 1}^{\lceil ir/j \rceil } S_k  \right| = \lceil i/j |V| \rceil.$
	\end{proof}

%
%
%

	Next consider some Nordhaus-Gaddum type bounds on the $i/j$-partial domination number. 
	
	\begin{theorem}\label{thm:NordGaddum}
		If $G$ and $\bar{G}$ are connected, then 
		\[\gamma_{i/j}(G) + \gamma_{i/j}(\bar{G}) \leq \left \lceil \frac{i}{j}\left(\left \lfloor \frac{n}{2}\right \rfloor +2 \right) \right \rceil +1 \]
	\end{theorem}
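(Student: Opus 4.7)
The plan is to reduce this to an established Nordhaus--Gaddum result for ordinary domination, using Theorem \ref{thm:gammai/j} to pass from $\gamma$ to $\gamma_{i/j}$. Specifically, I would first apply Theorem \ref{thm:gammai/j} to both $G$ and its complement to obtain
\[
\gamma_{i/j}(G) + \gamma_{i/j}(\bar G) \;\leq\; \left\lceil \tfrac{i}{j}\gamma(G) \right\rceil + \left\lceil \tfrac{i}{j}\gamma(\bar G) \right\rceil.
\]

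The next step is to collapse the two ceilings into one at the cost of an additive $1$, using the elementary inequality $\lceil a \rceil + \lceil b \rceil \leq \lceil a+b \rceil + 1$, which holds for all real $a,b$. This yields
\[
\gamma_{i/j}(G) + \gamma_{i/j}(\bar G) \;\leq\; \left\lceil \tfrac{i}{j}\bigl(\gamma(G) + \gamma(\bar G)\bigr) \right\rceil + 1.
\]

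Finally, I would invoke the known Nordhaus--Gaddum bound for the classical domination number under the connectivity hypothesis on both $G$ and $\bar G$, namely $\gamma(G) + \gamma(\bar G) \leq \lfloor n/2 \rfloor + 2$ (the strengthening of the general $n+1$ bound of Jaeger--Payan when both $G$ and $\bar G$ are connected). Substituting this into the previous inequality and using monotonicity of the ceiling in its argument produces exactly
\[
\left\lceil \tfrac{i}{j}\bigl(\lfloor n/2 \rfloor + 2\bigr) \right\rceil + 1,
\]
as required.

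The main obstacle is the last step: the proof leans on the connected-graph Nordhaus--Gaddum bound $\gamma(G) + \gamma(\bar G) \leq \lfloor n/2 \rfloor + 2$, and care must be taken that this is the bound actually being used (as opposed to the weaker $n+1$ bound), since the hypothesis of the theorem is precisely that both $G$ and $\bar G$ are connected. The remaining manipulations are routine ceiling arithmetic, and the hypothesis on connectivity is used exactly once, at the invocation of this classical bound.
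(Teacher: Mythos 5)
Your proposal is correct and follows essentially the same route as the paper's own proof: apply Theorem \ref{thm:gammai/j} to both $G$ and $\bar{G}$, merge the two ceilings at the cost of $+1$ via $\lceil a\rceil + \lceil b\rceil \leq \lceil a+b\rceil + 1$, and then invoke the classical bound $\gamma(G)+\gamma(\bar{G}) \leq \lfloor n/2\rfloor + 2$ for $G$ and $\bar{G}$ both connected (which the paper attributes to Bollob\'as--Cockayne and to Joseph--Arumugam). The only difference is bibliographic, not mathematical.
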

	\begin{proof}
		By Theorem  \ref{thm:gammai/j} applied to $G$ and $\bar{G}$ we get that  $\gamma_{i/j}(G) \lceil \leq i/j \gamma(G) \rceil $, and $\gamma_{i/j}(\bar{G}) \leq \lceil i/j \gamma(\bar{G}) \rceil$.  Adding these two inequalities gives
		\[  \gamma_{i/j}(G) +\gamma_{i/j}(\bar{G}) \leq \lceil  i/j \gamma(G) \rceil + \lceil i/j \gamma(\bar{G}) \rceil \leq  \lceil  i/j \left( \gamma(G) +  \gamma(\bar{G}) \right) \rceil +1. \]
		In the right hand side of this inequality we see $\gamma(G) + \gamma(\bar{G})$ which is a well studied quantity from classical domination theory. In particular Bollob\'as and Cockayne and also Joseph and Arumugam have given the following upper bound for it \cite{Harary}
		\[\gamma(G)+ \gamma(\bar{G}) \leq \lfloor n/2\rfloor +2.\]
		Combining this with the inequality above gives the result
		\[  \gamma_{i/j}(G) +\gamma_{i/j}(\bar{G}) \leq \left  \lceil  \frac{i}{j}\left( \left \lfloor \frac{n}{2}\right \rfloor +2  \right) \right \rceil   +1. \]
	\end{proof}
	In particular for $p=1/2$ we get the following:
	\begin{corollary}\label{cor:NordGaddum}
	If $G$ and $\bar{G}$ are connected, then 
	\[\gamma_{1/2}(G) + \gamma_{1/2}(\bar{G}) \leq \left \lceil \frac{1}{2}\left \lfloor \frac{n}{2}\right \rfloor  \right \rceil +2. \]
\end{corollary}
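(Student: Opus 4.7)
The plan is to obtain the corollary as a direct specialization of Theorem \ref{thm:NordGaddum} with $i=1$ and $j=2$, followed by a small arithmetic simplification of the resulting ceiling expression.

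First, I would substitute $i/j = 1/2$ into the inequality of Theorem \ref{thm:NordGaddum}, which immediately yields
\[\gamma_{1/2}(G) + \gamma_{1/2}(\bar{G}) \leq \left\lceil \frac{1}{2}\left( \left\lfloor \frac{n}{2} \right\rfloor + 2 \right) \right\rceil + 1.\]
The only remaining task is to rewrite the right-hand side in the form claimed by the corollary.

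Next I would use the standard fact that the ceiling function commutes with adding an integer: if $k$ is an integer and $x$ is real, then $\lceil x + k \rceil = \lceil x \rceil + k$. Applying this with $x = \tfrac{1}{2}\lfloor n/2 \rfloor$ and $k = 1$ gives
\[\left\lceil \frac{1}{2}\left( \left\lfloor \frac{n}{2}\right\rfloor + 2 \right) \right\rceil = \left\lceil \frac{1}{2}\left\lfloor \frac{n}{2} \right\rfloor + 1 \right\rceil = \left\lceil \frac{1}{2}\left\lfloor \frac{n}{2} \right\rfloor \right\rceil + 1.\]
Substituting back and combining with the extra $+1$ already present yields the desired bound of $\lceil \tfrac{1}{2}\lfloor n/2 \rfloor \rceil + 2$. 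Since the argument is a purely mechanical specialization plus one line of ceiling arithmetic, there is no serious obstacle; the only point to be careful about is correctly pulling the integer $1$ out of the outer ceiling.
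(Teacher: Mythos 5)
Your proposal is correct and matches the paper's (implicit) argument exactly: the corollary is stated as a direct specialization of Theorem \ref{thm:NordGaddum} at $i/j = 1/2$, and your extraction of the integer $1$ from the ceiling via $\lceil x + 1 \rceil = \lceil x \rceil + 1$ is the only arithmetic needed to match the stated form. No issues.
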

	
	\section{Related Parameters}
	In classical domination theory one often considers the quantity $\Gamma(G)$, which is defined as the maximum size of a dominating set that is minimal -- minimal meaning that if any nonempty subset of vertices is removed, it will no longer have the property of being a dominating set. We generalize this concept to define $\Gamma_{p}(G)$. 
	\begin{definition}
		\[ \Gamma_{p}(G)  := \max \{ |S| : S \text{ dominates a proportion }p\text{ and is minimal} \}.\]
	\end{definition}
	 
	This quantity is related to $\gamma_p(G)$ in the following way: 
	
	\begin{observation}\label{obs:gammaGamma} For any graph $G$, 
		$\gamma_{p}(G) \leq \Gamma_{p}(G)$.
	\end{observation}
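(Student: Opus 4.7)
The plan is to show that every $\gamma_p$-set is automatically a minimal $p$-dominating set, so that $\gamma_p(G)$ is realized as the cardinality of \emph{some} minimal $p$-dominating set and hence is at most the maximum $\Gamma_p(G)$. This mirrors the standard argument showing $\gamma(G) \leq \Gamma(G)$ in classical domination theory.

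First, I would fix a $\gamma_p$-set $S \subseteq V$, meaning $|S| = \gamma_p(G)$ and $|N[S]|/|V| \geq p$. Second, I would verify minimality by contradiction: suppose some nonempty $T \subseteq S$ can be removed so that $S \setminus T$ is still a $p$-dominating set. Then $S \setminus T$ is a $p$-dominating set with $|S \setminus T| < |S| = \gamma_p(G)$, directly contradicting the definition of $\gamma_p(G)$ as the \emph{minimum} cardinality of a $p$-dominating set. Hence no such $T$ exists, so $S$ is minimal in the sense of the definition preceding the observation.

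Third, since $S$ is a minimal $p$-dominating set, it belongs to the collection over which $\Gamma_p(G)$ is maximized, so $|S| \leq \Gamma_p(G)$, and therefore $\gamma_p(G) = |S| \leq \Gamma_p(G)$.

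There is no substantive obstacle here — the statement is the routine ``minimum cardinality implies minimal'' observation. The only small care needed is to quantify over removal of arbitrary nonempty subsets, as the paper's definition of minimality requires, rather than just single vertices; but this causes no issue, since exhibiting any strictly smaller $p$-dominating subset of $S$ immediately contradicts the minimality of $|S|$.
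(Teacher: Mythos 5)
Your proof is correct and is exactly the standard argument the paper has in mind (the paper states this as an Observation without proof): a minimum-cardinality $p$-dominating set must be minimal, since deleting any nonempty subset would yield a strictly smaller $p$-dominating set, so its size is bounded above by the maximum taken in the definition of $\Gamma_p(G)$. Nothing further is needed.
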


	We point out in the following example that $\gamma_{p}(G) $ could be strictly less than $\Gamma_{p}(G)$. 
	
\begin{example}\label{ex:PathGamma} On the path with six vertices there is a $\Gamma_{1/2}(P_6)$ set given by taking the two leaf vertices see Figure \ref{fig:PathGamma}; this set dominates four vertices and is minimal. However, as we saw in Proposition \ref{prop:Paths}, $\gamma_{1/2}(P_6) = 1$. 
\end{example}

		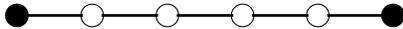
\begin{figure}[hh]
		\centering
			\begin{tikzpicture}[every loop/.style={}]
			\filldraw (0,1) circle (.15) node (A) {};
			\draw (1,1) circle (.15) node (B) {}
			(2,1) circle (.15) node (C) {}
			(3,1) circle (.15) node (D) {}
			(4,1) circle (.15) node (E) {};
			\filldraw
			(5,1) circle (.15) node (F)  {};
			\draw[line width=1pt] (A) edge (B);
			\draw[line width=1pt] (B) edge (C);
			\draw[line width=1pt] (C) edge (D);
			\draw[line width=1pt] (D) edge (E);
			\draw[line width=1pt] (E) edge (F);
			\end{tikzpicture}
		\caption{For the path on 6 vertices, the shaded leaf vertices define a 1/2-dominating set which is minimal but not minimum.\label{fig:PathGamma}}
\end{figure}
	For classical domination there is a well known chain of inequalities, first introduced in \cite{Cock1978}, that relates the domination number of a graph to several related parameters. We state the inequality without defining all the parameters here.
	\begin{proposition}\cite[Proposition 4.2]{Cock1978}  \label{prop:Domineq}
		For any graph $G$,
		\[ ir(G) \leq \gamma(G) \leq i(G) \leq \beta_0(G) \leq \Gamma(G) \leq IR(G).\]
	\end{proposition}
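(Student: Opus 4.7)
The plan is to establish the chain of inequalities by first isolating three structural lemmas that link irredundance, domination and independence, and then reading off each of the five comparisons as a short corollary. I would begin by fixing the parameters: recall that a set $S\subseteq V$ is \emph{irredundant} if every $v\in S$ has a \emph{private neighbor} in $N[v]$ (a vertex not dominated by $S\setminus\{v\}$); then $ir(G)$ and $IR(G)$ are the minimum and maximum cardinalities of a maximal irredundant set, $\gamma(G)$ and $\Gamma(G)$ are the minimum and maximum cardinalities of a minimal dominating set, $i(G)$ is the minimum cardinality of a maximal independent set, and $\beta_0(G)$ is the maximum cardinality of an independent set.

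The proof proper rests on three structural facts. \textbf{Lemma A:} a dominating set $D$ is minimal if and only if every $v\in D$ has a private neighbor relative to $D$. (Both directions are immediate from unwinding definitions: if $v$ has no private neighbor then $D\setminus\{v\}$ still dominates, and conversely.) \textbf{Lemma B:} every minimal dominating set is a \emph{maximal} irredundant set. Irredundance is Lemma A; for maximality one must show that adding any $v\notin D$ destroys the private-neighbor condition for some vertex of $D\cup\{v\}$, which is done by a short case analysis on whether $v$ itself acquires a private neighbor in $D\cup\{v\}$, using the minimality of $D$ to force the failure at some $w\in D$. \textbf{Lemma C:} every maximal independent set $I$ is a minimal dominating set; maximality of $I$ forces each $u\notin I$ to have a neighbor in $I$ (otherwise $I\cup\{u\}$ is independent), and independence makes each $v\in I$ its own private neighbor, so Lemma A applies.

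With these in hand, the five inequalities are one-liners. For $ir(G)\leq\gamma(G)$, a $\gamma$-set is minimal dominating, hence by Lemma B a maximal irredundant set of size $\gamma(G)$, so $ir(G)\leq\gamma(G)$. For $\gamma(G)\leq i(G)$, an $i$-set is in particular dominating. For $i(G)\leq\beta_0(G)$, a $\beta_0$-set is a maximal independent set, hence its cardinality bounds the minimum cardinality of a maximal independent set from above. For $\beta_0(G)\leq\Gamma(G)$, the same $\beta_0$-set is, by Lemma C, a minimal dominating set. Finally $\Gamma(G)\leq IR(G)$ repeats the argument for $ir\leq\gamma$ at the upper extreme via Lemma B.

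The main obstacle is Lemma B, and specifically the claim that a minimal dominating set $D$ cannot be properly enlarged while preserving irredundance. The delicate point is that upon adding a $v\notin D$, the new vertex $v$ might legitimately pick up a private neighbor in $N[v]$, and so the failure of irredundance must be located inside $D$; the argument must trace how the private neighbors of the $w\in D$ that originally witnessed $v$'s domination are \emph{stolen} by $v$. All other steps, including Lemmas A and C and the five comparisons, are essentially bookkeeping once the definitions are on the table.
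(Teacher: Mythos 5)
The paper does not prove this proposition at all: it is quoted verbatim from Cockayne and Hedetniemi \cite{Cock1978} (the authors even decline to define the parameters), so there is no in-paper argument to compare against. Your proposal is the standard proof of the domination chain, and its architecture --- Ore's private-neighbor characterization of minimal dominating sets (your Lemma A), ``minimal dominating $\Rightarrow$ maximal irredundant'' (Lemma B), ``maximal independent $\Rightarrow$ minimal dominating'' (Lemma C), and the five comparisons as corollaries --- is correct and complete in outline.

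However, the step you single out as the main obstacle is misdiagnosed. In Lemma B, once $D$ is a dominating set, the maximality of $D$ as an irredundant set is immediate: for any $v\notin D$, a private neighbor of $v$ with respect to $D\cup\{v\}$ would have to lie in $N[v]\setminus N[D]$, and this set is empty because $N[D]=V$. So $v$ itself always fails the private-neighbor condition, no case analysis is needed, and minimality of $D$ plays no role in the maximality half (it is only needed to get irredundance of $D$ in the first place, via Lemma A). Your stated plan --- that $v$ ``might legitimately pick up a private neighbor'' and that one must then ``locate the failure inside $D$'' by tracing stolen private neighbors of some $w\in D$ --- would not survive execution: in $P_3=a\mbox{--}b\mbox{--}c$ with $D=\{b\}$ and $v=a$, the vertex $b$ retains the private neighbor $c$ in $\{a,b\}$, so no failure occurs inside $D$; the set $\{a,b\}$ is non-irredundant only because $a$ has no private neighbor. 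Replace that paragraph with the one-line observation above and the proof is complete.
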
 
	It would be an interesting furture direction to consider generalizing these concepts and results to partial domination. 
	\begin{openquestion}
		What meaning and relationships do the quantities in Proposition \ref{prop:Domineq} have in the setting of partial domination?
	\end{openquestion}

	\section{Computational Complexity}\label{sec:Complexity}
	We conclude our discussion of partial domination by considering the computational complexity of finding a partially dominating set. The problem of finding a dominating set is a classic NP-Hard problem \cite{Garey}, so it is an interesting question to ask how hard the problem is if we relax the requirement that all the vertices in the graph be dominated. In fact, we can consider the parameterized complexity of the problem in terms of the number of vertices that are dominated by a $\gamma_p$-set.  Before considering the question of the complexity of finding a partial dominating set, we need to consider the following related problem. 
	
	\begin{definition}
		The {\em t-Dominating Set Problem} is that of finding a set of at most $k$ nodes that dominate at least $t$ nodes of a graph $G=(V,E)$. 
	\end{definition}
	To understand the relationship of this problem to partial set domination, first note that $t-$domination is stated as considering a number of vertices in the graph rather than a proportion, but the two problems are easily related in this sense by considering $t = p \cdot |V|$ for a proportion $p$.  A more important difference to note is that the $t-$domination problem has no minimal conditions on the size of the set used to dominate; it is concerned with finding a set with at most $k$ nodes that dominates at least $t$ nodes.  The complexity of the $t-$Dominating Set Problem has been studied in \cite{Kneis}, where the authors prove the following result.
	
	\begin{theorem}[Kneis, et al. \cite{Kneis}] 
		t-Dominating set is fixed time parameterizable in parameter $t$ with a $O\left((4+\epsilon)^t poly(n)\right)$ randomized algorithm and a $O\left((16+\epsilon)^t poly(n)\right)$ deterministic algorithm. 
	\end{theorem}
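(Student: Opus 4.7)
The plan is to attack the $t$-Dominating Set problem using the random separation paradigm introduced by Alon, Yuster, and Zwick under the name color coding, specifically the ``divide-and-color'' refinement that is tailored to set-cover-like problems. If a valid $k$-vertex solution $S$ dominates at least $t$ vertices, then there is some $t$-element witness set $T \subseteq N[S]$; the goal is to find $S$ together with a consistent $T$ while only paying an exponential cost in $t$ (not $n$).

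As a warm-up I would first implement plain color coding: randomly color $V$ with $t$ colors, and call $T$ \emph{colorful} if its $t$ vertices receive pairwise distinct colors. A fixed $T$ is colorful with probability at least $t!/t^t \ge e^{-t}$, so an expected $e^t$ repetitions suffice. For each coloring, solve the colored problem ``find $S$ of minimum size such that $N[S]$ meets every color class'' by a subset dynamic program on $\{1,\ldots,t\}$ taking $O^{*}(2^t)$ time. The combined running time is $O^{*}((2e)^t)$, which is weaker than the target $(4+\epsilon)^t$.

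To sharpen the base to $4+\epsilon$, I would replace the $t$-coloring with a recursive two-coloring (divide-and-color). Randomly bicolor $V$; branch on the assumption that a known fraction of the witness $T$ and a known fraction of $S$ fall in each color class; recurse on each side with parameters halved; and combine partial solutions by concatenation. The analysis balances the $2^{-t}$-style loss from demanding a balanced split of the target against the cost of the recursion, and a careful accounting yields a base arbitrarily close to $4$. For the deterministic version, I would substitute each random bicoloring with a deterministic $(n,t)$-splitter or universal set of polynomial size (Naor--Schulman--Srinivasan); the size of this family costs roughly an additional $4^t$ factor, producing the stated $O^{*}((16+\epsilon)^t)$ bound.

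The principal obstacle is the analysis of the divide-and-color recursion: one must verify that random bipartitioning reduces the effective parameter by a constant factor while preserving the solution structure with probability large enough to offset the recursive blow-up, and that the book-keeping across branches does not erode the exponent. Equivalently, one must show that a chromatic coloring finer than two colors is unnecessary for this particular problem --- a structural observation specific to closed-neighborhood covers. Derandomization then hinges on an explicit splitter construction of polynomial size with a tight combinatorial guarantee; it is precisely the overhead of this construction that accounts for the quadratic gap between the randomized base $4+\epsilon$ and the deterministic base $16+\epsilon$.
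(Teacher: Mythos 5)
This theorem is not proved in the paper at all: it is quoted verbatim from Kneis et al.\ \cite{Kneis} and used as a black box in Section \ref{sec:Complexity}, so there is no internal proof to measure your proposal against. Judged on its own terms, your sketch correctly identifies the machinery that the cited source actually employs: the warm-up is sound (a fixed $t$-element witness $T\subseteq N[S]$ is colorful under a uniform $t$-coloring with probability at least $t!/t^t\geq e^{-t}$, and the per-coloring task of choosing few vertices whose closed neighborhoods meet every color class is a set-cover instance on $t$ elements solvable by subset dynamic programming in $O\left(2^t \cdot poly(n)\right)$ time, giving $O\left((2e)^t\cdot poly(n)\right)$ overall), and the passage to divide-and-color with splitter-based derandomization is indeed how the base is driven down to $4+\epsilon$ randomized and $16+\epsilon$ deterministic.

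That said, what you have written is a plan, not a proof. The two steps you yourself flag as ``the principal obstacle'' are precisely the content of the theorem: (i) the analysis of the recursion $T(t)\approx 2^{t}\cdot 2\,T(t/2)+poly(n)$ showing that the probability loss from demanding a balanced split of the witness set is offset by halving the parameter, so that the solution of the recurrence is $\left((4+\epsilon)^t\right)$ rather than something worse; and (ii) the verification that an explicit $(n,t)$-universal-set construction of size $2^{t+o(t)}\log n$ can replace each random bicoloring without disturbing the recursion, which is where the quadratic degradation to $(16+\epsilon)^t$ comes from. There is also a correctness detail you gloss over: in the recursive step one must argue that the two halves of $S$ and the two halves of $T$ can be recombined even though a vertex of $S$ on one side may dominate witness vertices on the other side; handling this cross-domination (or arguing it can be assumed away) is part of the ``book-keeping'' you defer. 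Until those pieces are carried out, the proposal establishes only the weaker $O\left((2e)^t\cdot poly(n)\right)$ bound from the warm-up, not the stated theorem.
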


 	Now consider the question of what one could do with an algorithm that solved the $t-$dominating set problem as a subroutine to solve partial set domination.  For any $k\in [0,n]$ one could use this subroutine to determine if there was a $t-$dominating set. Using this ability one could perform a binary search on $k\in [0,n]$ until finding the smallest $k$ having a $t-$dominating set. Such a $k$ would equal $\gamma_{p}(G)$ where $t = p \cdot |V|$. Since the complexity of a binary search adds a $\log n$ factor to the running time, we arrive at the following result that the partial domination problem is fixed time parameterizable.

 	\begin{theorem}
 		Partial dominating set is fixed time parameterizable in parameter $t:=p\cdot n$ with a $O\left( (4+\epsilon)^t poly(n)\log n\right)$ randomized algorithm and a $O\left((16+\epsilon)^t poly(n)\log n\right)$ deterministic algorithm. 
 	\end{theorem}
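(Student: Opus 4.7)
The plan is to use the algorithm for $t$-Dominating Set from Kneis et al. as a black-box subroutine and wrap it in a binary search over the size parameter $k$. Recall that $t$-Dominating Set is the decision problem: given $G$, $k$, and $t$, does there exist $S\subseteq V$ with $|S|\le k$ and $|N[S]|\ge t$? Our partial domination problem, in contrast, asks for the \emph{minimum} $k$ such that some $S$ of size $k$ has $|N[S]|\ge t$, where $t=p\cdot n$. These two problems are tightly connected: $\gamma_p(G)$ equals the smallest $k$ for which the $t$-Dominating Set instance $(G,k,t)$ returns ``yes.''

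First I would observe the monotonicity needed to make binary search legitimate: if some $S$ of size $k$ dominates at least $t$ vertices, then the same $S$ (viewed as a set of size at most $k'$ for any $k'\ge k$) also witnesses a ``yes'' instance for the $t$-Dominating Set problem with parameter $k'$. Thus the predicate ``$(G,k,t)$ is a yes-instance'' is monotone nondecreasing in $k$, so binary search over $k\in\{0,1,\dots,n\}$ correctly identifies the threshold value, which is exactly $\gamma_p(G)$.

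Next I would bound the total running time. A binary search on $\{0,1,\dots,n\}$ uses $O(\log n)$ queries; each query is one invocation of the Kneis et al. algorithm with the same parameter $t$ (the parameter $k$ changes but only $t$ controls the FPT exponent). Multiplying the per-call cost by $O(\log n)$ yields running times $O((4+\epsilon)^t\,\mathrm{poly}(n)\log n)$ in the randomized case and $O((16+\epsilon)^t\,\mathrm{poly}(n)\log n)$ in the deterministic case, exactly as claimed.

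The only subtlety worth spelling out is that the parameter of interest, namely $t=p\cdot n$, is what controls the exponential factor, and this is inherited directly from the cited theorem; the binary search layer contributes only a polylogarithmic factor and does not interact with the FPT parameter. I do not expect any serious obstacle beyond carefully stating the monotonicity observation; the result is essentially a black-box reduction from optimization to decision via parametric search.
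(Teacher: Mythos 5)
Your proposal matches the paper's argument: the paper likewise invokes the Kneis et al.\ algorithm for $t$-Dominating Set as a black-box subroutine and performs a binary search over $k\in[0,n]$ to find the smallest $k$ admitting a $t$-dominating set, incurring the extra $\log n$ factor. Your explicit monotonicity observation justifying the binary search is a welcome addition, but the approach is the same.
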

 	We conclude our look at computational complexity by noting an interesting open question. 
 	\begin{openquestion}
 		Of particular interest would be the design of a polynomial algorithm for computing the $p$-domination number of a tree.
 	\end{openquestion}
	\section{Conclusion}
 	In summary, this paper has introduced the consideration of partial domination in graphs, which can arise naturally in many applications. Partial domination can be seen as a generalization of classical domination; but as the graph Figure \ref{ex:Disjoint} motivates, partial domination can behave very differently from classical domination.  In particular, we have determined $\gamma_{p}$ for several classes of graphs, developed several bounds on $\gamma_{p}$, and considered the computational complexity of finding $\gamma_{p}$ in a general graph. For some ideas about future direction on this problem, one might consider generalizing concepts and results from domination to partial domination.  In particular there is a famous chain of inequalities first introduced in \cite{Cock1978} related to domination, and it is interesting to consider what meaning and relationships these quantities might have in the setting of partial domination. It would also be interesting to consider what can be said more concretely about partial domination in particular classes of graphs including bounds arising from graph products. Furthermore, it would be interesting to determine the computational complexity for computing $\gamma_{p}$ and constructing $\gamma_{p}$-sets in various classes of graphs. We hope that this introductory paper and promising future directions will promote further interested in considering partial domination.

	\nocite{*}
	\bibliographystyle{plain}
	\bibliography{references}

\end{document}